\theoremstyle{definition}
\newtheorem{defin}{Definition}[section]
\theoremstyle{remark}
\newtheorem{example}[defin]{Example}
\newtheorem{remar}[defin]{Remark}
\theoremstyle{plain}
\newtheorem{thm}[defin]{Theorem}
\newtheorem{prop}[defin]{Proposition}
\newtheorem{corol}[defin]{Corollary}
\newcommand{\wh}{\widehat}
\newcommand{\ol}{\overline}
\numberwithin{equation}{section}
\begin{document}

\title{Subdiagrams of Bratteli diagrams supporting finite invariant measures}

\date{}

\author{{\bf S. Bezuglyi}, {\bf O. Karpel} and {\bf J. Kwiatkowski}}

\maketitle

\begin{abstract}
We study finite measures on Bratteli diagrams invariant with respect to the tail equivalence relation. Amongst the proved results on finiteness of measure extension, we characterize the vertices of a Bratteli diagram that support an ergodic finite invariant measure.

\end{abstract}

\section{Introduction and background}

In this note, we continue the study of ergodic measures on the path space $X_B$ of a Bratteli diagram $B$ started in \cite{BKMS_2010} and \cite{BKMS_FinRank}. Recall that, given a minimal (or even aperiodic) homeomorphism $T$ of a Cantor set $X$, one can construct a refining sequence $(\xi_n)$ (beginning with $\xi_0 = X$) of clopen partitions such that every $\xi_n$ is a finite collection of $T$-towers $(X_v^{(n)} : v \in V_n)$ \cite{herman_putnam_skau:1992}, \cite{giordano_putnam_skau:1995}, \cite{medynets:2006}. This fact is in the base of the very fruitful idea:  $(X,T)$ can be realized as a homeomorphism $\varphi$ (Vershik map) acting on the path space of a Bratteli diagram. By definition, a Bratteli diagram $B$ is represented as an infinite graph with the set of vertices $V$ partitioned into levels $V_n,\ n\geq 0,$ such that the edge set $E_n$ between levels $n-1$ and $n$ is determined by the intersection of towers of partitions $\xi_{n-1}$ and $\xi_n$ (the detailed definition and references are given below). Every $T$-invariant (hence, $\varphi$-invariant) measure $\mu$ on $X$  is completely defined by its values $\mu(X_v^{(n)})$ on all towers where $v \in V_n$ and  $n\geq 0$. In \cite{BKMS_2010} and  \cite{BKMS_FinRank}, the cases of stationary and finite rank Bratteli diagrams (i.e. $|V_n| \leq d$  for all $n$) were studied. We notice that, while studying $\varphi$-invariant measures, we can ignore some rather subtle questions about the existence of a Vershik map on the path space (see \cite{BKY}, \cite{BY}) and work with the measures invariant with respect to the tail equivalence relation $\mathcal E$ (cofinal equivalence relation, in other words).

Our  interest and motivation for this work arises from the following result proved in \cite{BKMS_FinRank}:   for any ergodic probability measure $\mu$ on a finite rank diagram $B$, there exists a subdiagram $\ol B$ of $B$ defined by a sequence $W= (W_n)$, where $W_n \subset V_n$, such that $\mu(X_w^{(n)})$ is bounded from zero for all $w \in W_n$ and $n$.
It was also shown that $\mu$ can be obtained as an extension of an ergodic measure on the subdiagram $\overline B$, in other words, $\overline B$ supports $\mu$ (the detailed definitions can be found below).



What is  an analogue of the above result for general Bratteli diagrams? Suppose we take a subdiagram $\ol B = \ol B(W)$ of a Bratteli diagram $B$ and consider an ergodic probability measure $\nu$ on $\ol B$. Then this measure can be naturally extended (by $\mathcal E$-invariance) to a measure $\wh\nu$ defined on the $\mathcal E$-saturation $\wh X_{\ol B}$ of the path space $X_{\ol B}$.
When the cardinality of $W_n$ is growing, then we cannot expect that the measures of the towers corresponding to the vertices from $W_n$ are bounded from below.
But we do expect that the rate of changes of $\widehat{\nu}(X_v^{(n)})$ is essentially different for $v\in W_n$ and $v\notin W_n$.
We prove that if the measure $\wh\nu$ is finite and the ratio $\frac{|W_{n}|}{|V \setminus W_{n}|}$ is bounded, then the minimal value of $\{\wh\nu (X^{(n)}_v) : v \notin W_n\}$ is much smaller than the maximal value $\{\wh\nu (X^{(n)}_v) : v \in W_n\}$. We also get the results for the ratio of the tower heights corresponding to $W_n$ and $V \setminus W_{n}$.


Another assertion that is proved in the paper is a modification of \cite[Theorem 6.1]{BKMS_FinRank}. We also give a criterion for the finiteness of the extended measure $\wh\nu$, using the condition on entries of the incidence matrices. A number of examples related to this issue is also considered in the paper.

The most of definitions and notation used in this paper are taken from  \cite{BKMS_FinRank}. Since the concept of Bratteli diagrams has been studied in a great number of recent research papers devoted to various aspects of Cantor dynamics, we give here only some  necessary definitions and notation referring to the pioneering articles \cite{herman_putnam_skau:1992}, \cite{giordano_putnam_skau:1995} (see also  \cite{berthe:2010}, \cite{BKMS_FinRank}) where the reader can find more detailed definitions and the widely used techniques, for instance, the  telescoping procedure.

A {\it Bratteli diagram} is an infinite graph $B=(V,E)$ such that the vertex
set $V =\bigcup_{i\geq 0}V_i$ and the edge set $E=\bigcup_{i\geq 1}E_i$
are partitioned into disjoint subsets $V_i$ and $E_i$ where

(i) $V_0=\{v_0\}$ is a single point;

(ii) $V_i$ and $E_i$ are finite sets;

(iii) there exists a range map $r$ and a source map $s$, both from $E$ to
$V$, such that $r(E_i)= V_i$, $s(E_i)= V_{i-1}$, and
$s^{-1}(v)\neq\emptyset$, $r^{-1}(v')\neq\emptyset$ for all $v\in V$
and $v'\in V\setminus V_0$.

Given a Bratteli diagram $B$, the $n$-th {\em incidence matrix}
$F_{n}=(f^{(n)}_{v,w}),\ n\geq 0,$ is a $|V_{n+1}|\times |V_n|$
matrix such that $f^{(n)}_{v,w} = |\{e\in E_{n+1} : r(e) = v, s(e) = w\}|$
for  $v\in V_{n+1}$ and $w\in V_{n}$. Here the symbol $|\cdot |$ denotes the cardinality of a set.

For a Bratteli diagram $B = (V,E)$, the set of all infinite paths in $B$ is denoted by $X_B$.  The topology defined by finite paths (cylinder sets) turns $X_B$ into a 0-dimensional metric compact space. We will consider only such Bratteli diagrams for which $X_B$ is a {\em Cantor set}.  The tail equivalence relation $\mathcal E$ on $X_B$  says that two paths $x =(x_n)$ and $y =(y_n)$ are tail equivalent  if and only if $x_n = y_n$ for $n$ sufficiently large. Let $\overline W = \{W_n\}_{n>0}$ be a sequence of (proper, non-empty) subsets $W_n$ of $V_n$. Set $W'_n = V_n \setminus W_n$. The (vertex) subdiagram $\overline B =  (\ol W, \ol E)$ is defined by the vertices $\ol W = \bigcup_{i\geq 0} W_n$ and the edges $\ol E$ that have their source and range in $\ol W$. In other words,
the incidence matrix $\ol F_n$ of $\ol B$ is defined by those edges from $B$ that have their source and range in vertices from $W_{n}$ and $W_{n+1}$, respectively.

We use also the following notation for an $\mathcal E$-invariant measure $\mu$ on $X_B$ and $n \geq 1$ and $v \in V_n$:

\begin{itemize}
\item  $X_v^{(n)}\subset X_B$ denotes the set of all paths that go through the vertex $v$;

\item  $h_v^{(n)}$ denotes the cardinality of the set of all finite paths (cylinder sets) between $v_0$ and $v$;

\item $p_v^{(n)}$ denotes the $\mu$-measure of the cylinder set $e(v_0,v)$ corresponding to a finite path between $v_0$ and $v$ (since $\mu$ is $\mathcal E$-invariant, the value $p_v^{(n)}$ does not depend on $e(v_0,v)$).
\end{itemize}

If $\ol B$ is a subdiagram defined by a sequence $\ol W = (W_n)$, then we use the notation $\ol X_w^{(n)}$ and $\ol h_w^{(n)}$ to denote the corresponding objects of the subdiagram $\ol B$.

Take a subdiagram $\ol B$ and consider the set $X_{\ol B}$ of all infinite paths whose edges belong to $\ol B$. Let $\widehat X_{\ol B} := \mathcal E(X_{\ol B})$ be the subset of paths in $X_B$ that are tail equivalent to paths from $X_{\ol B}$. In other words, the $\mathcal E$-invariant subset $\widehat X_{\ol B} $ of $X_B$ is the saturation of $X_{\ol B}$ with respect to the equivalence relation $\mathcal E$ (or $X_{\ol B}$ is a countable complete section of  $\mathcal E$ on $\widehat X_{\ol B}$). Let $\mu$ be a probability measure on $X_{\ol B}$ invariant with respect to the tail equivalence relation defined on $\ol B$. Then $\mu$ can be canonically extended to the measure $\widehat \mu$ on the space $\widehat X_{\ol B}$ by invariance with respect to $\mathcal E$ \cite{BKMS_FinRank}. If we want to extend $\widehat{\mu}$ to the whole space $X_{B}$, we can set $\widehat {\mu} (X_B \setminus \widehat{X}_{\ol B}) = 0$.

Specifically, take a finite path $\ol e \in \ol E(v_0, w)$ from the top vertex $v_0$ to a vertex $w \in W_n$  that belongs to the subdiagram $\ol B$.  Let $[\ol e]$ denote the cylinder subset of $X_{\ol B}$ determined by $\ol e$. For any finite path $s \in E(v_0, w)$ from the diagram $B$ with the same range $w$, we set $\widehat\mu ([s])  = \mu([\ol e])$. In such a way, the measure $\widehat\mu$ is defined on the $\sigma$-algebra of Borel subsets of $\wh X_{\ol B}$ generated by all clopen sets of the form $[z]$ where a finite path $z$ has the range in a vertex from $\ol B$. Clearly, the restriction of $\wh\mu$ on $X_{\ol B}$ coincides with $\mu$. We note that  the value $\wh\mu(\wh X_{\ol B})$ can be either finite or infinite depending on the structure of $\ol B$ and $B$ (see below Theorems \ref{neccsuff} and \ref{criterion}). Furthermore, the {\it support} of $\widehat\mu$ is, by definition, the set $\wh X_{\ol B}$. Set
\begin{equation}\label{n-th level}
\wh X_{\ol B}^{(n)} = \{x = (x_i)\in \wh X_{\ol B} : r(x_i) \in W_i, \ \forall i \geq n\}.
\end{equation}
Then $\wh X_{\ol B}^{(n)} \subset \wh X_{\ol B}^{(n+1)}$ and

\begin{equation}\label{extension_method}
\widehat\mu(\wh X_{\ol B}) = \lim_{n\to\infty} \widehat\mu(\wh X_{\ol B}^{(n)})
= \lim_{n\to\infty}\sum_{w\in W_n}  h^{(n)}_w p^{(n)}_w.
\end{equation}

\section{Characterization of subdiagrams supporting a measure}

Given a Bratteli diagram $B$, we consider the incidence matrix  $F_n = (f^{(n)}_{v,w}), v\in V_{n+1}, w\in V_{n}$ and set $A_n = F_n^T$, the transpose of $F_n$.
Together with the sequence of incidence matrices $(F_n)$, we consider the sequence of stochastic matrices $(Q_n)$ whose entries are:
$$
q^{(n)}_{v,w} = f^{(n)}_{v,w}\frac{h_w^{(n)}}{h_v^{(n+1)}},\ v\in V_{n+1}, w\in V_{n}.
$$

The following result was  obtained in \cite[Proposition 6.1]{BKMS_FinRank}  for  Bratteli diagrams of finite rank. We note here that {\em this result remains true for arbitrary Bratteli diagrams}, the proof is the same as in \cite{BKMS_FinRank}.

\begin{thm}\label{neccsuff}
Let $B$ be a Bratteli diagram with incidence stochastic matrices $\{Q_n = (q_{v,w}^{(n)})\}$ and let $\ol B$ be a proper vertex subdiagram of $B$ defined by a sequence of subsets $(W_n)$ where $W_n \subset V_n$.

(1) Let $\mu$ be a probability invariant measure on the path space $X_{\ol B}$ such that the extension $\widehat{\mu}$ of $\mu$ on $\wh X_{\ol B}$ is finite. Then

\begin{equation}\label{neces2}
\sum_{n=1}^{\infty} \sum_{w \in W_{n+1}} \sum_{v \in W^{'}_{n}} q_{w,v}^{(n)} \mu(\ol X_{w}^{(n+1)})< \infty.
\end{equation}

(2) If
\begin{equation}\label{suffic}
\sum_{n=1}^{\infty} \sum_{w \in W_{n+1}} \sum_{v \in W^{'}_{n}} q_{w,v}^{(n)} < \infty,
\end{equation}
then any probability invariant measure $\mu$ defined on the path space $X_{\ol B}$ of the subdiagram $\ol B$ extends to a finite measure $\widehat{\mu}$ on $\wh X_{\ol B}$.
\end{thm}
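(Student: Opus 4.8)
The plan is to track the increasing sequence
\[
a_n := \wh\mu(\wh X_{\ol B}^{(n)}) = \sum_{w\in W_n} h_w^{(n)}\, p_w^{(n)},
\]
introduced in \eqref{extension_method}, whose limit is $\wh\mu(\wh X_{\ol B})$. Because $\wh X_{\ol B}^{(n)}\subset \wh X_{\ol B}^{(n+1)}$, the sequence $(a_n)$ is monotone, so finiteness of $\wh\mu$ is equivalent to $\sup_n a_n<\infty$, i.e.\ to summability of the increments $a_{n+1}-a_n$. The whole argument reduces to one exact computation of this increment, after which parts (1) and (2) become two estimates of the same quantity in opposite directions.

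First I would compute $a_{n+1}-a_n$ explicitly. Using the height recursion $h_w^{(n+1)}=\sum_{u\in V_n} f^{(n)}_{w,u} h_u^{(n)}$ valid in $B$, I split the sum over $u\in V_n$ into $u\in W_n$ and $u\in W'_n$. After interchanging the order of summation, the $W_n$-part becomes $\sum_{u\in W_n} h_u^{(n)}\sum_{w\in W_{n+1}} f^{(n)}_{w,u} p_w^{(n+1)}$, and here the inner sum equals $p_u^{(n)}$ by the $\mathcal E$-invariance of $\mu$ on $\ol B$, i.e.\ by the harmonic relation $p_u^{(n)}=\sum_{w\in W_{n+1}} f^{(n)}_{w,u} p_w^{(n+1)}$ for $u\in W_n$ (valid because the edges between $W$-vertices are the same in $B$ and $\ol B$). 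Thus the $W_n$-part equals exactly $a_n$ and telescopes away, leaving the engine identity
\[
a_{n+1}-a_n = \sum_{w\in W_{n+1}} p_w^{(n+1)} \sum_{u\in W'_n} f^{(n)}_{w,u}\, h_u^{(n)} .
\]
This computation is the one delicate point: it hinges on keeping the full heights $h_w^{(n)}$, which govern the extension $\wh\mu$, strictly separate from the subdiagram heights $\ol h_w^{(n)}$, which govern $\mu$, and on invoking the harmonic relation for $\mu$ in the form involving only edges internal to $\ol B$.

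For part (1) I would rewrite the summand of \eqref{neces2}. Using $q_{w,v}^{(n)}=f^{(n)}_{w,v}\, h_v^{(n)}/h_w^{(n+1)}$ and $\mu(\ol X_w^{(n+1)})=\ol h_w^{(n+1)} p_w^{(n+1)}$, the inner double sum equals $\sum_{w\in W_{n+1}}\bigl(\ol h_w^{(n+1)}/h_w^{(n+1)}\bigr) p_w^{(n+1)}\sum_{v\in W'_n} f^{(n)}_{w,v} h_v^{(n)}$. Since every path in $\ol B$ is in particular a path in $B$, we have $\ol h_w^{(n+1)}\le h_w^{(n+1)}$, so the factor $\ol h_w^{(n+1)}/h_w^{(n+1)}\le 1$ and, by the engine identity, the summand is bounded above by $a_{n+1}-a_n$. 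Summing over $n$ gives $\sum_{n\ge1}(\cdots)\le \lim_n a_n - a_1 = \wh\mu(\wh X_{\ol B})-a_1<\infty$, which is \eqref{neces2}.

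For part (2) I would estimate the increment from above. Factoring $h_u^{(n)}=h_w^{(n+1)}\bigl(h_u^{(n)}/h_w^{(n+1)}\bigr)$ in the engine identity turns it into $a_{n+1}-a_n=\sum_{w\in W_{n+1}} h_w^{(n+1)} p_w^{(n+1)}\,\alpha_w^{(n)}$, where $\alpha_w^{(n)}:=\sum_{u\in W'_n} q_{w,u}^{(n)}$ is the probability of escaping from $w$ into $W'_n$. Writing $S_n$ for the inner double sum of \eqref{suffic}, we have $S_n=\sum_{w\in W_{n+1}}\alpha_w^{(n)}$, hence $\alpha_w^{(n)}\le S_n$ for each $w$, so $a_{n+1}-a_n\le S_n\, a_{n+1}$, i.e.\ $(1-S_n)a_{n+1}\le a_n$. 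Since $\sum_n S_n<\infty$ forces $S_n\to 0$, one has $S_n<1$ for all $n\ge n_0$, whence $a_{n+1}\le a_n/(1-S_n)$ and, by iteration, $a_N\le a_{n_0}\prod_{n=n_0}^{N-1}(1-S_n)^{-1}$. The product converges precisely because $\sum_n S_n<\infty$, while $a_{n_0}$ is a finite sum of finite terms (each $p_w^{(n_0)}\le1$ as $\mu$ is a probability measure); therefore $\sup_N a_N<\infty$ and $\wh\mu(\wh X_{\ol B})<\infty$ for every invariant probability measure $\mu$. The only analytic input here is the elementary equivalence $\sum_n S_n<\infty \iff \prod_n(1-S_n)^{-1}<\infty$.
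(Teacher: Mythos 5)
Your proof is correct, and it is worth recording how it sits relative to the paper: the paper does not actually write out a proof of this theorem --- it is cited as identical to the proof of Proposition 6.1 in \cite{BKMS_FinRank} --- and the closest in-paper argument is the proof of Theorem~\ref{criterion}, which obtains exactly your ``engine identity'' by a measure-theoretic route, decomposing $\wh X_{\ol B}$ into the increasing union of the sets $\wh X_{\ol B}^{(n)}$ of \eqref{n-th level} and computing $\wh\mu$ of each difference set by counting the finite paths that enter a vertex $w\in W_{n+1}$ through $W'_n$. You instead derive the same increment formula purely algebraically, splitting the height recursion $h_w^{(n+1)}=\sum_{u\in V_n}f_{w,u}^{(n)}h_u^{(n)}$ over $W_n$ and $W'_n$ and telescoping the $W_n$-part away via the harmonic relation $p_u^{(n)}=\sum_{w\in W_{n+1}}f_{w,u}^{(n)}p_w^{(n+1)}$, which you correctly justify using only edges internal to $\ol B$ (valid since all $B$-edges between $W_n$ and $W_{n+1}$ are $\ol B$-edges); this avoids any appeal to the identity $\wh\mu(X_w^{(n+1)})=h_w^{(n+1)}p_w^{(n+1)}$ and rests only on \eqref{extension_method}, which is if anything cleaner. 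Your part (1), with the factor $\ol h_w^{(n+1)}/h_w^{(n+1)}\le 1$, transparently exhibits \eqref{neces2} as a weakening of the sharp condition \eqref{neces} of Theorem~\ref{criterion}, which is precisely why the necessary condition here fails to be sufficient (cf.\ Example~\ref{example}(1), where the gap between $\mu(\ol X_w^{(n+1)})$ and $\wh\mu(X_w^{(n+1)})$ is what lets \eqref{neces2} hold for an infinite extension). Your part (2) supplies the step the paper leaves to the citation: the crude but sufficient bound $\alpha_w^{(n)}\le S_n$ gives $(1-S_n)a_{n+1}\le a_n$, and the iteration $a_N\le a_{n_0}\prod_{n=n_0}^{N-1}(1-S_n)^{-1}$, legitimate once $S_n<1$ eventually (forced by $\sum_n S_n<\infty$), bounds the monotone sequence $(a_n)$ uniformly, and the bound holds for every invariant probability $\mu$ as the statement requires. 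I find no gaps; the only cosmetic point is that the set-theoretic route of Theorem~\ref{criterion} yields the exact value $\wh\mu(\wh X_{\ol B})=1+\sum_n(a_{n+1}-a_n)$ with the interpretation of each increment as the mass entering through $W'_n$, whereas your algebraic derivation gets the same identity with less geometric bookkeeping but also less intuition for why the summand measures the ``escaping'' mass.
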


The following example shows that in general case, sufficient condition (\ref{suffic}) is not necessary and necessary condition  (\ref{neces2}) is not sufficient.

\begin{example}\label{example}
(1) First, we give an example of an infinite measure $\widehat{\mu}$ on a Bratteli diagram $B$ such that $\widehat{\mu}$ is an extension of a probability measure $\mu$ from a subdiagram $\overline{B}(\overline{W})$ and condition (\ref{neces2}) is satisfied.

Let $B$ be a stationary Bratteli diagram with incidence matrix
$$
F =
\begin{pmatrix}
3 & 0 & 0\\
1 & 2 & 0\\
0 & 1 & 3
\end{pmatrix}.
$$
Suppose the sequence $(W_n)$ is stationary and formed by the second and third vertices of each level. Then $(W'_n)$ is formed by the first vertex.
Since $q_{3,1} = 0$, we have
$$
\sum_{n=1}^{\infty} \sum_{v \in W_{n+1}} \sum_{w \in W^{'}_{n}} q_{v,w}^{(n)} \mu(\ol X_{v}^{(n+1)}) = \sum_{n=1}^{\infty}q_{2,1} \mu(\ol X_{2}^{(n+1)}).
$$
Compute
$$
q_{2,1} = \frac{h_{1}^{(n)}}{h_{2}^{(n+1)}} = \frac{3^{n-1}}{2^n + \sum_{k=0}^{n-1}2^k3^{n-1-k}} = \frac{3^{n-1}}{2^n + (3^n - 2^n)} = \frac{1}{3}.
$$
It is easy to see that
$$
\mu(\ol X_{2}^{(n+1)}) = \frac{2^{n-1}}{3^n}.
$$
Then
$$
q_{2,1} \mu(\ol X_{2}^{(n+1)}) = \frac{2^{n-1}}{3^{n+1}},
$$
hence condition (\ref{neces2}) is satisfied. On the other hand, we know that the extension $\wh\mu$ is an infinite measure because the Perron-Frobenius eigenvalue of the incidence matrix of $\ol B$ is 3, the same as for the odometer corresponding to the first vertex (see \cite{BKMS_2010}).

(2) For any stationary Bratteli diagram, sufficient condition (\ref{suffic}) is never satisfied. Thus, to show that  (\ref{suffic}) is not necessary, we can consider any stationary diagram with finite full measure $\wh\mu$. For instance, one can take the diagram with incidence matrix
$$
F =
\begin{pmatrix}
2 & 0 & 0\\
1 & 2 & 0\\
0 & 1 & 3
\end{pmatrix}
$$
and $\mu$ is the measure on the subdiagram $\ol B$ defined as in (1).
\end{example}

In contrast to Theorem \ref{neccsuff}, the following result gives a necessary and sufficient condition for finiteness of a measure extension.

\begin{thm}\label{criterion}
Let $B, \ol B, Q_n, W_n$ be as in Theorem \ref{neccsuff} and
$\mu$ a probability measure on the path space of the vertex  subdiagram $\ol B$.
The measure extension $\wh \mu(\wh X_{\ol B})$ is finite if and only if
\begin{equation}\label{neces}
\sum_{n=1}^{\infty} \sum_{w \in W_{n+1}} \wh\mu(X_{w}^{(n+1)})\sum_{v \in W^{'}_{n}} q_{w,v}^{(n)}
< \infty
\end{equation}
or
\begin{equation}\label{neces-1}
\sum_{i=1}^{\infty} \left(\sum_{w\in W_{i+1}} h_w^{(i+1)}p_w^{(i+1)} - \sum_{w\in W_{i}} h_w^{(i)}p_w^{(i)}\right) < \infty.
\end{equation}
\end{thm}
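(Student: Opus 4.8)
The plan is to reduce both conditions to one and the same telescoping series built from the truncated masses. Write $S_n := \sum_{w\in W_n} h^{(n)}_w p^{(n)}_w$, so that by \eqref{extension_method} we have $S_n = \wh\mu(\wh X^{(n)}_{\ol B})$, and since $\wh X^{(n)}_{\ol B}\subset \wh X^{(n+1)}_{\ol B}$ the sequence $(S_n)$ is non-decreasing with $\lim_n S_n = \wh\mu(\wh X_{\ol B})$. In this notation condition \eqref{neces-1} reads exactly $\sum_{i\ge 1}(S_{i+1}-S_i)<\infty$. Since the summands are non-negative, the partial sums telescope to $S_{N+1}-S_1$, so the series in \eqref{neces-1} converges if and only if $(S_n)$ is bounded, i.e. if and only if $\wh\mu(\wh X_{\ol B})=\lim_n S_n<\infty$. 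This already settles the equivalence of \eqref{neces-1} with finiteness; it remains to identify \eqref{neces} with the same series.

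The heart of the argument is the term-by-term identity
\[
S_{i+1}-S_i=\sum_{w\in W_{i+1}}\wh\mu(X^{(i+1)}_w)\sum_{v\in W'_i}q^{(i)}_{w,v},
\]
whose right-hand side is precisely the $i$-th summand of \eqref{neces}. To prove it I would start from the two structural relations available at consecutive levels: the path-count identity $h^{(i+1)}_w=\sum_{v\in V_i}f^{(i)}_{w,v}h^{(i)}_v$, valid in the full diagram $B$, and the harmonicity of the cylinder measures inside the subdiagram, namely $p^{(i)}_v=\sum_{w\in W_{i+1}}f^{(i)}_{w,v}p^{(i+1)}_w$ for $v\in W_i$. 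The crucial point is that this second relation sums only over $w\in W_{i+1}$ (the subdiagram successors of $v$), because $p^{(i)}_v$ with $v\in W_i$ is the measure of the paths that remain in $\ol B$; it is \emph{this} subdiagram harmonicity, and not the full harmonic relation over all of $V_{i+1}$, that must be used.

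Substituting the subdiagram relation into $S_i=\sum_{v\in W_i}h^{(i)}_v p^{(i)}_v$ and interchanging summation rewrites it as $S_i=\sum_{w\in W_{i+1}}p^{(i+1)}_w\sum_{v\in W_i}f^{(i)}_{w,v}h^{(i)}_v$. Expanding $h^{(i+1)}_w$ by the path-count identity and splitting $V_i=W_i\sqcup W'_i$ gives $S_{i+1}=\sum_{w\in W_{i+1}}p^{(i+1)}_w\bigl(\sum_{v\in W_i}f^{(i)}_{w,v}h^{(i)}_v+\sum_{v\in W'_i}f^{(i)}_{w,v}h^{(i)}_v\bigr)$. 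Subtracting, the $W_i$-sums cancel and only the $W'_i$-contribution survives:
\[
S_{i+1}-S_i=\sum_{w\in W_{i+1}}p^{(i+1)}_w\sum_{v\in W'_i}f^{(i)}_{w,v}h^{(i)}_v.
\]
Using $f^{(i)}_{w,v}h^{(i)}_v=q^{(i)}_{w,v}h^{(i+1)}_w$ together with $h^{(i+1)}_w p^{(i+1)}_w=\wh\mu(X^{(i+1)}_w)$ then turns this into the claimed identity.

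Assembling the pieces, the series in \eqref{neces} coincides summand-by-summand with $\sum_i(S_{i+1}-S_i)$, which is the series in \eqref{neces-1}; hence \eqref{neces} and \eqref{neces-1} hold simultaneously, and by the first paragraph each is equivalent to $\wh\mu(\wh X_{\ol B})<\infty$. I expect the main obstacle to be the bookkeeping of the two harmonic relations: one must be careful that the cylinder measures $p^{(i)}_v$ (for $v\in W_i$) obey the harmonic identity relative to $\ol B$ and not relative to $B$, since the latter would introduce spurious $W'_{i+1}$-terms and destroy the clean cancellation. The non-negativity of every summand, which is what makes boundedness of $(S_n)$ equivalent to convergence of the telescoping series, is the other point to record explicitly.
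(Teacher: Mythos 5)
Your proof is correct, and its skeleton coincides with the paper's: both arguments telescope $\wh\mu(\wh X^{(n)}_{\ol B})$ along the filtration (\ref{n-th level}) and identify the $n$-th increment with the $n$-th summand of (\ref{neces}), so that finiteness of $\wh\mu(\wh X_{\ol B})$ is equivalent to convergence of the common series. Where you genuinely differ is in how the increment identity is obtained. The paper computes $\wh\mu(\wh X^{(n+1)}_{\ol B}\setminus \wh X^{(n)}_{\ol B})$ directly, describing the set difference as the paths that pass through a vertex $v\in W'_n$ at level $n$ and remain in $\ol W$ afterwards, and counting: $h^{(n)}_v$ finite paths into $v$, $f^{(n)}_{w,v}$ edges into $w\in W_{n+1}$, each contributing $p^{(n+1)}_w$; this gives $\sum_{w\in W_{n+1}}\sum_{v\in W'_n} f^{(n)}_{w,v} h^{(n)}_v p^{(n+1)}_w$, the same expression you reach. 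You instead derive it purely algebraically from the full-diagram path count $h^{(n+1)}_w=\sum_{v\in V_n} f^{(n)}_{w,v} h^{(n)}_v$ together with the subdiagram harmonic relation $p^{(n)}_v=\sum_{w\in W_{n+1}} f^{(n)}_{w,v}\, p^{(n+1)}_w$ for $v\in W_n$, cancelling the $W_n$-part of the two consecutive sums. Your insistence that the harmonicity be taken relative to $\ol B$ rather than $B$ is exactly the right precaution: the measure $\mu$ lives on $X_{\ol B}$, so its $p$-values satisfy the cocycle relation only along edges of the subdiagram, and the full-diagram relation would indeed spoil the cancellation. This algebraic route makes the cancellation mechanical and sidesteps the set-difference bookkeeping entirely (where the paper's displayed description in fact carries a small slip, writing $r(x_n)\notin W'_n$ where $r(x_n)\in W'_n$ is intended, with a shift of indices); the paper's route, in exchange, shows geometrically where the new mass enters, namely through the vertices of $W'_n$. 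You also record explicitly that the increments are non-negative, which is the point that converts convergence of the telescoping series into boundedness of $\wh\mu(\wh X^{(n)}_{\ol B})$ and is left implicit in the paper.
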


\begin{proof}  Indeed, let
$\wh X_{\ol B}^{(n)}$ be defined as in (\ref{n-th level}). Then $\wh\mu(\wh X_{\ol B}) = \lim_{n\to\infty}\wh\mu(\wh X_{\ol B}^{(n)})$. Since
$$
\wh X_{\ol B}^{(n)}  = \wh X_{\ol B}^{(1)} \cup (\wh X_{\ol B}^{(2)}\setminus \wh X_{\ol B}^{(1)}) \cup \cdots \cup (\wh X_{\ol B}^{(n)}\setminus \wh X_{\ol B}^{(n-1)}),
$$
we obtain
$$
\wh\mu (\wh X_{\ol B}^{(n)}) = 1 + \sum_{i=1}^{n-1}\left(\sum_{w\in W_{i+1}} h_w^{(i+1)}p_w^{(i+1)} - \sum_{w\in W_{i}} h_w^{(i)}p_w^{(i)}\right).
$$
This relation proves (\ref{neces-1}). We remark that condition (\ref{neces-1}) is formulated using the vertices related only to the subdiagram $\ol B$.

On the other hand,
$$
\wh X_{\ol B}^{(n)}\setminus \wh X_{\ol B}^{(n-1)} = \{ x = (x_i) \in \wh X_{\ol B} :
r(x_n) \notin W'_n,\ r(x_i) \in W_i,\ i\geq n+1\}.
$$
and therefore
\begin{eqnarray*}
\wh \mu(\wh X_{\ol B}^{(n)}\setminus \wh X_{\ol B}^{(n-1)}) & = &
\sum_{w\in W_{n+1}} \sum_{v\in W^{'}_n} f_{w,v}^{(n)}h_v^{(n)} p_w^{(n+1)}
\\
& = & \sum_{w\in W_{n+1}} \sum_{v\in W^{'}_n} q_{w,v}^{(n)}h_w^{(n+1)} p_w^{(n+1)} \\
&=& \sum_{w\in W_{n+1}} \wh\mu(X_w^{(n+1)}) \sum_{v\in W^{'}_n} q_{w,v}^{(n)}.
\end{eqnarray*}
Thus,
$$
 \wh\mu(\wh X_{\ol B}) = 1 + \sum_{n=1}^\infty \sum_{w\in W_{n+1}} \wh\mu(X_w^{(n+1)}) \sum_{v\in W^{'}_n} q_{w,v}^{(n)}.
$$

\end{proof}

To simplify the formulation of the next statement, we assume that $f_{w,v} > 0$ for every $w \in W_{n+1}$, $v \in W'_n$ and $n > 0$, i.e. for every $w \in W_{n+1}$ there is an edge to some vertex from $W'_n$. This assumption is not restrictive since one can use the telescoping procedure to ensure the positivity of all entries of $F$.

\begin{corol}\label{ratioheights}
Let $B, \ol B, Q_n, W_n$ be as in Theorem \ref{neccsuff} and $\mu$ a probability measure on the path space of the vertex  subdiagram $\ol B$.
Let the measure extension $\wh \mu(\wh X_{\ol B})$ be finite.
Then
$$
\sum_{n=1}^\infty \min_{w \in W_{n+1}}\max_{v \in W'_n} q_{w,v}^{(n)} <
\infty.
$$
In particular,
\begin{equation}\label{heights ratio}
\sum_{n=1}^{\infty}\min_{w \in W_{n+1}}\max_{v \in W'_n} \frac{h^{(n)}_v}{h^{(n+1)}_w} < \infty.
\end{equation}
\end{corol}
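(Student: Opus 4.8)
The plan is to derive the Corollary directly from the necessary-and-sufficient condition (\ref{neces}) of Theorem \ref{criterion}, turning the finite sum $\sum_{v\in W'_n}q^{(n)}_{w,v}$ into its single largest term and then exploiting the fact that the mass carried by the vertices of $W_{n+1}$ never falls below $1$. Write $m_n = \min_{w\in W_{n+1}}\max_{v\in W'_n} q^{(n)}_{w,v}$; the goal is to show $\sum_n m_n<\infty$ and then to pass from the stochastic entries to the height ratios using the standing positivity assumption.

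First I would fix $n$ and note that, because every entry $q^{(n)}_{w,v}$ is nonnegative, for each individual $w\in W_{n+1}$ one has
$$
\sum_{v\in W'_n} q^{(n)}_{w,v}\ \ge\ \max_{v\in W'_n} q^{(n)}_{w,v}\ \ge\ m_n ,
$$
the last inequality holding since $m_n$ is the minimum over $w$ of these inner maxima. Substituting this into the $n$-th summand of (\ref{neces}) gives
$$
\sum_{w\in W_{n+1}} \wh\mu(X^{(n+1)}_w)\sum_{v\in W'_n} q^{(n)}_{w,v}\ \ge\ m_n\sum_{w\in W_{n+1}} \wh\mu(X^{(n+1)}_w).
$$
Next I would identify the remaining factor: by the telescoping computation in the proof of Theorem \ref{criterion} (equivalently, by (\ref{extension_method})), $\sum_{w\in W_{n+1}}\wh\mu(X^{(n+1)}_w)=\sum_{w\in W_{n+1}}h^{(n+1)}_w p^{(n+1)}_w=\wh\mu(\wh X^{(n+1)}_{\ol B})$. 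Since the sets $\wh X^{(n)}_{\ol B}$ increase with $n$ and $\wh X^{(1)}_{\ol B}=X_{\ol B}$ carries the probability measure $\mu$, we get $\wh\mu(\wh X^{(n+1)}_{\ol B})\ge \wh\mu(X_{\ol B})=1$. Hence each summand in (\ref{neces}) is at least $m_n$, and finiteness of (\ref{neces}) forces $\sum_{n=1}^\infty m_n<\infty$, which is the first assertion.

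For the height-ratio statement (\ref{heights ratio}) I would invoke the positivity hypothesis $f^{(n)}_{w,v}\ge 1$ for all $w\in W_{n+1}$, $v\in W'_n$ stated before the Corollary. Then $q^{(n)}_{w,v}=f^{(n)}_{w,v}\,h^{(n)}_v/h^{(n+1)}_w\ge h^{(n)}_v/h^{(n+1)}_w$ holds termwise, so taking the maximum over $v$ and then the minimum over $w$ yields $m_n\ge \min_{w\in W_{n+1}}\max_{v\in W'_n} h^{(n)}_v/h^{(n+1)}_w$; the convergence of $\sum_n m_n$ then gives (\ref{heights ratio}). I do not expect a genuine obstacle here, as the argument is a chain of one-sided estimates. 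The only points requiring care are the directions of the inequalities — we may freely discard all but the largest summand because we only need an \emph{upper} bound on $m_n$ — and the observation that the weight $\wh\mu(\wh X^{(n+1)}_{\ol B})$ is bounded below by $1$ rather than being allowed to shrink, which is exactly what lets the crude bound $\sum_{v}q^{(n)}_{w,v}\ge m_n$ survive the weighting by $\wh\mu(X^{(n+1)}_w)$.
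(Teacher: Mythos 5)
Your proof is correct and takes essentially the same route as the paper: both lower-bound each summand of (\ref{neces}) from Theorem \ref{criterion} by $\min_{w\in W_{n+1}}\max_{v\in W'_n} q^{(n)}_{w,v}$ times the weight $\sum_{w\in W_{n+1}}\wh\mu(X_w^{(n+1)})$, show that weight is bounded away from zero, and then pass to (\ref{heights ratio}) via the standing assumption $f^{(n)}_{w,v}\geq 1$. The only (harmless) difference is that you bound the weight below by $1$ via monotonicity, $\sum_{w\in W_{n+1}}\wh\mu(X_w^{(n+1)})=\wh\mu(\wh X^{(n+1)}_{\ol B})\geq \wh\mu(\wh X^{(1)}_{\ol B})=1$, whereas the paper extracts a constant $C>0$ from the convergence of this sum to $\wh\mu(\wh X_{\ol B})>0$.
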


\begin{proof}
By Theorem~\ref{criterion}, we have
\begin{eqnarray*}
\wh\mu(\wh X_{\ol B}) & = & 1 + \sum_{n=1}^\infty \sum_{w\in W_{n+1}} \wh\mu(X_w^{(n+1)}) \sum_{v\in W^{'}_n} q_{w,v}^{(n)}\\
&\geq& 1+ \sum_{n=1}^\infty \sum_{w\in W_{n+1}} \wh\mu(X_w^{(n+1)}) \max_{v\in W^{'}_n}q_{w,v}^{(n)}\\
&\geq& 1+ \sum_{n=1}^\infty \min_{w \in W_{n+1}}\max_{v \in W'_n} q_{w,v}^{(n)} \sum_{w\in W_{n+1}} \wh\mu(X_w^{(n+1)}).
\end{eqnarray*}
Since
$$\sum_{w\in W_{n+1}} \wh\mu(X_w^{(n+1)}) \rightarrow
\wh\mu(\wh X_{\ol B}) > 0,
$$
there is a constant $C > 0$ such that $\sum_{w\in W_{n+1}}
\wh\mu(X_w^{(n+1)}) > C$ for all $n$. Hence we obtain
$$
\sum_{n=1}^\infty \min_{w \in W_{n+1}}\max_{v \in W'_n} q_{w,v}^{(n)} <
\infty.
$$
 Since $f_{w,v} > 0$ for every $w \in W_{n+1}$, $v \in W'_n$ and $n > 0$,
relation (\ref{heights ratio}) follows.
\end{proof}

\begin{remar}
Let $B$ be a stationary Bratteli diagram. If $B$ is simple then there is a unique ergodic invariant measure $\nu$ on $X_B$. Suppose that $\lambda$ is the  Perron-Frobenius eigenvalue for the incidence matrix of $B$. Then all the heights $h_v^{(n)}$ of $B$ grow as $\lambda^n$ and there is no proper subdiagram $\ol B$ such that $\nu$ could be the extension of an invariant ergodic measure from $X_{\ol B}$. In the case of a non-simple stationary diagram $B$, the minimal support of an ergodic invariant measure is some simple stationary subdiagram $\ol B(W)$ whose incidence matrix $\ol F$ has the Perron-Frobenius eigenvalue $\ol\lambda$. Then for every $w \in W_n$, $h_w^{(n)}$ grows again as $\ol\lambda^n$ but for every $v \in W_n'$, $h_v^{(n)}$ grows as $\delta^n$ where $\delta < \ol \lambda$ (see~\cite{BKMS_2010}).
\end{remar}


\medskip
We recall that, for a finite rank Bratteli diagram, the support of any probability measure $\mu$ is determined by a vertex subdiagram $\ol B(W), W = (W_n),$ whose vertices $v$ satisfy the condition: there exists some $\delta >0$ such that $\mu(X^{(n)}_v) > \delta$ for all sufficiently large $n$ and all $v\in W_n$ \cite{BKMS_FinRank}. In particular, a Bratteli diagram $B$ is of exact finite rank if  the condition  $\mu(X^{(n)}_v) > \delta$ holds for all vertices $v\in V_n$. Clearly, the above result cannot be true for general Bratteli diagrams. Nevertheless, we can find another characterization for vertices  that belong to the support of a probability measure by studying how the measure of towers $X^{(n)}_v$ changes when $v$ is in the subdiagram and when $v$ is not.

\medskip
\begin{remar} Let $\wh\mu$ be the extension of measure $\mu$ defined on an exact finite rank subdiagram $\ol B$ of a Bratteli diagram $B$. Suppose that $\widehat{\mu}(\wh X_{\ol B}) < \infty$.
Then we have
\begin{eqnarray*}
\max\limits_{v \in W_n'}\widehat{\mu} (X_v^{(n)}) &\leq &\sum_{v\in W'_n}\widehat{\mu} (X_v^{(n)})\\
 &= &\widehat{\mu}(\wh X_{\ol B}) - \sum_{w\in W_n}\widehat{\mu}(X_w^{(n)}) \rightarrow  0 \mbox{  as  } n \rightarrow \infty.
\end{eqnarray*}
Since the measure of any tower $\ol X_w^{(n)}$ is bounded from zero, it follows that
\begin{equation}
\lim_{n \rightarrow \infty}\frac{\max\limits_{v \in W_n'}\widehat{\mu} (X_v^{(n)})}{\min\limits_{w \in W_n} \mu (\ol X_w^{(n)})} = 0,
\end{equation}
and therefore
\begin{equation}\label{ratio}
\lim_{n \rightarrow \infty}\frac{\max\limits_{v \in W_n'}\widehat{\mu} (X_v^{(n)})}{\min\limits_{w \in W_n} \widehat{\mu}(X_w^{(n)})} = 0.
\end{equation}

\end{remar}

It is very plausible that (\ref{ratio}) is true for any uniquely ergodic Bratteli subdiagram $\ol B$ but this question remains open. On the other hand we are able to prove the following result.

\begin{prop}\label{PropMin2Max}
Let $B$ be a Bratteli diagram with incidence matrices $F_n = \{(f_{v,w}^{(n)})\}$. Let $\ol B = \ol B(W)$ be a proper vertex subdiagram of $B$ such that $\dfrac{|W_{n}|}{|V \setminus W_{n}|} \leq C$ for every $n$ and some constant $C>0$. Suppose $\widehat{\mu}$ is a finite invariant measure on the path space $X_B$ which is obtained as the extension of a probability measure $\mu$ defined on $X_{\overline{B}}$. Then
\begin{equation}\label{min2max}
\lim_{n \rightarrow \infty}\frac{\min\limits_{w \in W_n'}\widehat{\mu} (X_v^{(n)})}{\max\limits_{w \in W_{n}} \mu (\ol X_w^{(n)})} = 0.
\end{equation}
\end{prop}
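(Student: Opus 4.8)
The plan is to reduce the statement to the single fact that the total $\wh\mu$-mass carried by the towers over the complement $W_n'=V_n\setminus W_n$ tends to $0$, and then to bound the extremal tower values by the corresponding averages. Throughout I read the numerator of (\ref{min2max}) as $\min_{v\in W_n'}\wh\mu(X_v^{(n)})$ and the hypothesis as $|W_n|/|W_n'|\le C$, since $W_n'=V_n\setminus W_n$.

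First I would record two elementary averaging inequalities. Since $\mu$ is a probability measure on $X_{\ol B}$ and the towers $\{\ol X_w^{(n)}:w\in W_n\}$ partition $X_{\ol B}$, we have $\sum_{w\in W_n}\mu(\ol X_w^{(n)})=1$, hence $\max_{w\in W_n}\mu(\ol X_w^{(n)})\ge 1/|W_n|$. On the other side $\min_{v\in W_n'}\wh\mu(X_v^{(n)})\le \frac1{|W_n'|}\sum_{v\in W_n'}\wh\mu(X_v^{(n)})$. Writing $S_n=\sum_{v\in W_n'}\wh\mu(X_v^{(n)})$ and using $|W_n|/|W_n'|\le C$, these combine to
\[
\frac{\min_{v\in W_n'}\wh\mu(X_v^{(n)})}{\max_{w\in W_n}\mu(\ol X_w^{(n)})}\le \frac{|W_n|}{|W_n'|}\,S_n\le C\,S_n,
\]
so the whole Proposition reduces to proving $S_n\to 0$.

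The hard part is exactly this convergence, and it is where finiteness of $\wh\mu$ is used. Put $Y_n=\{x\in X_B:r(x_n)\in W_n'\}=\bigsqcup_{v\in W_n'}X_v^{(n)}$, so that $S_n=\wh\mu(Y_n)$. Because $\wh\mu$ is concentrated on $\wh X_{\ol B}=\mathcal E(X_{\ol B})$, almost every path is eventually in the subdiagram: for $\wh\mu$-a.e.\ $x$ there is $N(x)$ with $r(x_i)\in W_i$ for all $i\ge N(x)$, and then $x\in Y_n$ forces $n<N(x)$. Hence $\mathbf 1_{Y_n}\to 0$ pointwise a.e., and since $\wh\mu$ is finite, dominated convergence (the constant $1$ is $\wh\mu$-integrable) gives $S_n=\wh\mu(Y_n)\to 0$. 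Equivalently, one may argue from (\ref{extension_method}): the towers over $V_n$ partition $X_B$ and $\wh\mu$ lives on $\wh X_{\ol B}$, so $S_n=\wh\mu(\wh X_{\ol B})-\sum_{w\in W_n}\wh\mu(X_w^{(n)})$, and the subtracted sum tends to $\wh\mu(\wh X_{\ol B})$. Combining with the display above, the ratio in (\ref{min2max}) is nonnegative and at most $C\,S_n\to 0$, which proves the Proposition. The only genuine content is $S_n\to0$; the cardinality hypothesis $|W_n|/|W_n'|\le C$ is needed precisely to keep the amplification factor $|W_n|$ from overwhelming the vanishing mass $S_n$, and the point to watch is that $\min$ over $W_n'$ and $\max$ over $W_n$ really are controlled by the averages rather than merely by the sums.
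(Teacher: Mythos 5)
Your proposal is correct and follows essentially the same route as the paper: the same two averaging bounds ($\max_{w\in W_n}\mu(\ol X_w^{(n)})\ge 1/|W_n|$ and $\min_{v\in W_n'}\wh\mu(X_v^{(n)})\le S_n/|W_n'|$), combined via the hypothesis $|W_n|/|W_n'|\le C$, reducing everything to $S_n=\wh\mu(X_B)-\sum_{w\in W_n}\wh\mu(X_w^{(n)})\to 0$. Your only addition is to spell out that last convergence (via dominated convergence, or equivalently via~(\ref{extension_method})), which the paper merely asserts with ``Notice that''; you also correctly repaired the index typos in the statement.
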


\begin{proof}
Let $W'_n = V_n \setminus W_n$. For every $n$, we have
\begin{eqnarray*}
\widehat{\mu}(\wh X_{\ol B})  &= &\sum_{v \in W_n'}\widehat{\mu} (X_v^{(n)}) + \sum_{w \in W_n}\widehat{\mu} (X_w^{(n)})\\
& \geq &|W_n'| \min_{v \in W_n'}\widehat{\mu} (X_v^{(n)}) + \sum_{w \in W_n}\widehat{\mu} (X_w^{(n)}).
\end{eqnarray*}
Hence
$$
\min_{v \in W_n'}\widehat{\mu} (X_v^{(n)}) \leq \frac{\widehat{\mu}(X_B) - \sum_{w \in W_n}\widehat{\mu} (X_w^{(n)})}{|W_n'|}.
$$
Since $\mu(X_{\overline{B}}) = 1$, we obtain
 $$
\max\limits_{w \in W_{n}}{\mu}(\ol X_w^{(n)}) \geq \frac{1}{|W_{n}|}.
$$
Hence,
$$
\frac{\min\limits_{w \in W_n'}\widehat{\mu} (X_v^{(n)})}{\max\limits_{w \in W_{n}} {\mu} (\ol X_w^{(n)})} \leq \frac{|W_{n}|(\widehat{\mu}(X_B) - \sum_{w \in W_n}\widehat{\mu} (X_w^{(n)}))}{|W_{n}'|}.
$$
Notice that $\widehat{\mu}(X_B) - \sum_{w \in W_n}\widehat{\mu} (X_w^{(n)}) \rightarrow 0$ as $n \rightarrow \infty$. This proves that equality~(\ref{min2max}) holds.

\end{proof}

\begin{remar}
Since $\widehat{\mu} (X_w^{(n)}) \geq {\mu} (\ol X_w^{(n)})$ for every $w \in W_n$ and every $n$, we obtain the following simple corollary of the proved result
$$
\lim_{n \rightarrow \infty}\frac{\min\limits_{v \in W_n'}\widehat{\mu} (X_v^{(n)})}{\max\limits_{w \in W_{n}} \widehat{\mu} (X_w^{(n)})} = 0.
$$
\end{remar}

\textbf{Acknowledgment.} This article was finished when the first named author
was a visitor of the Department of Mathematics, University of Iowa. He is thankful to the department for the hospitality and support.

\emph{Institute for Low Temperature Physics, Kharkov, Ukraine} and \emph{Faculty of Mathematics and Computer Science of Warmia and Mazury University, Olsztyn, Poland}\\
\emph{E-mail address: bezuglyi@ilt.kharkov.ua, helen.karpel@gmail.com, jkwiat@mat.umk.pl}

\end{document}